\theoremstyle{plain}
\newtheorem{definition}{Definition}
\newtheorem{theorem}{Theorem}
\newtheorem*{theorem*}{Theorem}
\newtheorem{lemma}{Lemma}
\newtheorem{corollary}{Corollary}
\newtheorem{example}{Example}
\newcommand{\refT}[1]{Theorem~\ref{T:#1}}
\newcommand{\refL}[1]{Lemma~\ref{L:#1}}
\newcommand{\refE}[1]{Example~\ref{E:#1}}
\newcommand{\refS}[1]{Section~\ref{S#1}}
\def\lra{\longrightarrow}
\def\hra{\hookrightarrow}
\def\xra{\xrightarrow}
\def\lla{\longleftarrow}
\def\ovs{\overset}
\def\pr{\mathop{\rm pr}\nolimits}
\def\R{{\mathbb R}}
\def\Z{{\mathbb Z}}
\def\H{{\mathbb H}}
\def\O{{\mathcal O}}
\def\g{{\mathfrak g}}
\def\g{{\mathfrak g}}
\def\i{\imath}
\def\pfi{\varphi}
\def\oH{\buildrel\circ\over H}
\def\oH1{\buildrel\circ\over H\kern-.02in{}^1}
\def\hookuparrow{{\cup\kern-.04in{}^\uparrow}}
\def\b{\mathbf b}
\def\Im{\mathop{\rm Im}\nolimits}
\def\const{\mathop{\rm const}\nolimits}
\def\Stab{\mathop{\rm Stab}\nolimits}
\def\Ext{\mathop{\rm Ext}\nolimits}
\def\Hom{\mathop{\rm Hom}\nolimits}
\def\pr{\mathop{\rm pr}\nolimits}
\def\id{\mathop{\rm id}\nolimits}
\def\pt{\mathop{\rm pt}\nolimits}
\def\tr{\mathop{\rm tr}\nolimits}
\def\bee{\begin{equation}}
\def\eee{\end{equation}}
\def\be{\begin{equation*}}
\def\ee{\end{equation*}}
\def\bal{\begin{aligned}}
\def\eal{\end{aligned}}
\begin{document}
\title{Homotopy classification\\ of maps into homogeneous spaces}          

\author{Sergiy Koshkin}             
\email{koshkinS@uhd.edu}       
\address{Department of Computer and Mathematical Sciences\\ 
        University of Houston-Downtown\\
        1 Main Street \#S705\\ 
	Houston, TX 77002}

\classification{57T15,55Q25,46M20,58D30}

\keywords{Homotopy class, homogeneous space, Hopf invariant, Faddeev model}

\begin{abstract}
We give an alternative to Postnikov's homotopy classification of
maps from 3-dimensional CW-complexes to homogeneous spaces $G/H$ of Lie groups. 
It describes homotopy classes in terms of lifts to the group $G$ and is suitable 
for extending the notion of homotopy to Sobolev maps. This is required in applications 
to variational problems of mathematical physics.
\end{abstract}

\submitted{Jim Stasheff}  

\volumeyear{2009} 
\volumenumber{4}  
\issuenumber{1}   

\startpage{1}     

\maketitle

\section*{Introduction}

A classical theorem of Postnikov \cite{Ps,WJ} gives a homotopy classification of continuous maps from a $3$--dimensional CW--complex $M$ to a connected simply connected complex $X$ of any dimension. First it gives the primary invariant that characterizes when $M\ovs{\psi,\pfi}\lra X$ are $2$--homotopic, i.e. their restrictions to a $2$--skeleton of $M$ are homotopic. When this happens a secondary invariant is defined along with a condition that makes $\psi$ and $\pfi$ homotopic. Unfortunately, the secondary invariant is hard to compute because one has to homotop one of the maps into the other on the $2$--skeleton. While the primary invariant can be characterized in terms of deRham cohomology and thus extended to discontinuous (Sobolev) maps, the secondary one of Postnikov is tied too closely to restrictions and homotopy that do not make sense without continuity.

In physical applications one is often forced to extend topological notions to Sobolev maps. This requires rethinking characterizations of homotopy classes in a way that makes such extension possible. One way is to interpret homotopy classes as connected components in the space of continuous maps. A natural generalization is to study components in spaces of Sobolev maps \cite{HL,Wh}. However, in applications to mathematical physics a more hands on approach is usually taken. One identifies invariants that characterize homotopy classes and then extends them to Sobolev maps. This approach is taken in recent works on the Faddeev model \cite{AK2,LY} with maps into $S^2=SU(2)/U(1)$. The present work was motivated by considering its generalization, the Faddeev-Niemi model \cite{FN} with targets $SU( N)/T$, $T$ the maximal torus. In this paper we  give an alternative description only for homotopy classes of continuous maps. The extension to Sobolev maps and applications will be published elsewhere \cite{K}.

We consider continuous maps from $3$--dimensional CW--complexes to compact simply connected homogeneous spaces of Lie groups. In exchange for loss in generality one gets a much more explicit description of the secondary invariant and its deRham presentation. It sheds new light on the primary invariant as well. 

A smooth manifold is called a homogenous space under an action of a Lie group $G$ if the action is transitive. Any homogenous space $X$ can be identified with the coset space $G/H$, where $H$ is the isotropy subgroup of a point. Up to a diffeomorphism, different pairs $G,H$ may produce the same space $X$. Throughout this paper {\sl $X$ will always denote $G/H$ where $G$ is compact, connected and simply connected and $H\subset G$ is connected}. This can be done without loss of generality by switching to a maximal compact subgroup, universal cover and/or identity component of $G$ as appropriate \cite{BtD,Mg}. Under this assumtion, we prove that $\psi$ and $\pfi$ are $2$-homotopic  if and only if there exists a continuous lift $M\ovs{u}\lra G$ such that $\psi=u\pfi$ (\refT{1.1}), where $u\pfi$ refers to the action of $G$ on $X$. This easily generalizes if we allow $u$ to be a Sobolev map but the real advantage is that the secondary invariant for $\psi,\pfi$ becomes the primary one for $u$.

It is convenient to introduce the basic class of a space $F$. Suppose 
$\pi_0(F)=...=\pi_{n-1}(F)=0$ then by the Hurewicz theorem $H_n (F,\mathbb{Z})\simeq\pi_n(F)$. 
The {\sl basic class $\b_F\in H^n(F,\pi_n(F))$} is the cohomology class that maps every homology class in $H_n(F,\Z)$ 
into its image in $\pi_n(F)$ under the Hurewicz isomorphism ($\b_F$ is called the fundamental class of $F$ by Steenrod). It follows essentially from the Eilenberg classification theorem \cite{St} that $\psi^*\b_F$ is the primary invariant for homotopy. Namely, two maps $M\ovs{\psi,\pfi}\lra F$ are $n$-homotopic, i.e. their restrictions to the $n$-skeleton are homotopic iff $\psi^*\b_F=\pfi^*\b_F$. 

Let $\b_G\in H^3(F,\pi_3(G))$ be the basic class of $G$ then the secondary invariant is $u^*\b_G$. However, $\psi,\pfi$ being homotopic is not quite equivalent to its vanishing. The problem is that $u$ in $\psi=u\pfi$ is not unique. Potentially, there are maps $M\ovs{w}\lra G$ with $w\pfi=\pfi$ but $w^*\b_G\ne0$. One has to factor out the subgroup 
generated by such maps
$$
\O_\pfi:=\{w^*\b_G\mid w\pfi=\pfi\}\subset H^3(M,\pi_3(G)).
$$
Despite its appearence, this subgroup depends only on the $2$-homotopy class of $\pfi$ (\refL{1.5}). Our main results (Theorems \ref{T:1.1} and \ref{T:1.2}) can be summarized as follows.
\begin{theorem*}Let $X=G/H$ be a compact simply connected homogeneous space and $M$ a $3$-dimensional CW--complex. Then two maps $M\ovs{\psi,\pfi}\lra X$ are homotopic if and only if there exists a map $M\ovs{u}\lra G$ such that $\psi=u\pfi$ and $u^*\b_G\in\O_\pfi$. Within the $2$-homotopy class of $\pfi$, the homotopy classes are in one-to-one correspondence with $H^3(M,\pi_3(G))/\O_\pfi$.
\end{theorem*}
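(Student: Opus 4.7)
The plan is to prove the \emph{iff} characterization first and then deduce the bijection with $H^3(M,\pi_3(G))/\O_\pfi$. By \refT{1.1}, if $\psi\simeq\pfi$ then in particular $\psi$ and $\pfi$ are 2-homotopic, so $\psi=u\pfi$ for some $u\colon M\to G$; the question reduces to deciding, for a given lift $u$, whether $u\pfi\simeq\pfi$.

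For the easy direction, assume $u^*\b_G=w^*\b_G$ for some $w$ with $w\pfi=\pfi$. Since $\b_G$ is a primitive class (the multiplication $m\colon G\times G\to G$ of a simply connected compact Lie group satisfies $m^*\b_G=\pr_1^*\b_G+\pr_2^*\b_G$ on the degree-three part), one has $(uw^{-1})^*\b_G=u^*\b_G-w^*\b_G=0$. Since $G$ is $2$-connected ($\pi_2(G)=0$ for any Lie group) and $M$ is $3$-dimensional, Eilenberg's theorem identifies $[M,G]$ with $H^3(M,\pi_3(G))$ via the basic class, so $uw^{-1}$ is null-homotopic. Composing any null-homotopy $uw^{-1}\simeq e$ with the action on $\pfi$ yields $\psi=uw^{-1}\pfi\simeq e\pfi=\pfi$.

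For the hard direction, suppose $\psi=u\pfi$ and $\psi\simeq\pfi$ via $F\colon M\times I\to X$. The crux is a homotopy-lifting lemma: there exists $U\colon M\times I\to G$ with $U(x,0)=u(x)$ and $F(x,t)=U(x,t)\pfi(x)$. Viewed as a section-extension problem for the fiber bundle $\alpha\colon M\times G\to M\times X$, $(x,g)\mapsto(x,g\pfi(x))$, pulled back via $(\id_M,F)$ to $M\times I$, this asks to extend a section from $M\times\{0\}$ to $M\times I$. The fibers of $\alpha$ are cosets of the pointwise stabilizers of $\pfi$, each diffeomorphic to $H$, and the obstructions in $H^{k+1}(M\times I,M\times\{0\};\pi_k(H))$ (with appropriate local coefficients) all vanish because $M\times\{0\}\hookrightarrow M\times I$ is a homotopy equivalence, making the pair acyclic. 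Granted the lemma, $U_1\pfi=F_1=\pfi$ gives $w:=U_1\in W:=\{w\mid w\pfi=\pfi\}$, and $U$ itself is a homotopy $u\simeq w$ in $[M,G]$, yielding $u^*\b_G=w^*\b_G\in\O_\pfi$. This lemma is the main technical obstacle of the proof.

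Having the iff, the bijection follows. Two lifts $u,u'$ of the same $\psi$ satisfy $u'=uw$ for some $w\in W$, so $(u')^*\b_G-u^*\b_G\in\O_\pfi$ and $[\psi]\mapsto u^*\b_G\bmod\O_\pfi$ is well-defined on 2-homotopy-class representatives; by \refL{1.5} and the iff it descends to homotopy classes. Surjectivity uses Eilenberg again: any $\alpha\in H^3(M,\pi_3(G))$ equals $u^*\b_G$ for some $u$, and $\psi:=u\pfi$ realizes the coset $\alpha+\O_\pfi$. For injectivity, if $u_1^*\b_G\equiv u_2^*\b_G\pmod{\O_\pfi}$, then $(u_1u_2^{-1})\psi_2=\psi_1$ with $(u_1u_2^{-1})^*\b_G\in\O_{\psi_2}=\O_\pfi$, and the iff applied to the pair $\psi_1,\psi_2$ gives $\psi_1\simeq\psi_2$.
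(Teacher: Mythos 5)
Your argument is correct, and its engine is the same as the paper's: \refT{1.1} reduces everything to lifts $u$ with $\psi=u\pfi$; the hard direction is the homotopy lifting property of the bundle of shifts (your relative section--extension over the acyclic pair $(M\times I,\,M\times\{0\})$ is precisely how that HLP is established, and is what the paper's \refL{1.4} invokes, except that you start the lift at $u$ rather than at the constant map $1$); and the easy direction is the Eilenberg identification $[M,G]\simeq H^3(M,\pi_3(G))$ combined with primitivity of $\b_G$, i.e.\ the Hopf--Samelson formula used in \refL{1.5}. Where you genuinely diverge is the bijection. The paper promotes the pointwise lifting to a fibration of mapping spaces $(M,G)\xra{\Pi}(M,G)\pfi$ with fiber $\Stab_\pfi$ (following Auckly--Speight) and reads $[(M,G)\pfi]\simeq[M,G]/\i_*\pi_0(\Stab_\pfi)$ off the $\pi_0$ tail of its exact sequence; you instead verify well-definedness, surjectivity and injectivity of the explicit correspondence $[\psi]\mapsto u^*\b_G \bmod \O_\pfi$ by hand, using that any two lifts of the same $\psi$ differ by an element of $\Stab_\pfi$ and that, by your strengthened forward implication, \emph{every} lift of a $\psi$ homotopic to $\pfi$ has class in $\O_\pfi$. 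Your route avoids function-space topology and makes explicit the coset structure that the exact-sequence-of-pointed-sets argument leaves slightly implicit (exactness at $\pi_0((M,G))$ by itself only identifies the fiber over the basepoint component, so the quotient description needs exactly the kind of translation argument you supply); the paper's route is shorter and exhibits $\Pi$ as a fibration, which is of independent interest. Both versions rest on \refL{1.5} to make $\O_\pfi$ depend only on the $2$-homotopy class, as you note.
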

When $H^2(M,\Z)=0$ any two maps $M\ovs{\psi,\pfi}\lra X$ are $2$-homotopic and therefore related by a lift $u$. We can always choose $\pfi$ to be the constant map and define the secondary invariant for a single map $\psi$. The subgroup $\O_{\const}$ is trivial and homotopy classes are in one-to-one correspondence with $H^3(M,\pi_3(G))$. When $M=S^3$ and $X=S^2=SU_2/U_1$ one has $\pi_3(G)\simeq\Z$ and our $u^*\b_G$ is essentially the Hopf invariant, cf. \cite{LY}. The class $\b_G$ has a particularly nice deRham presentation when the group $G$ is simple. Then always $\pi_3(G)\simeq\Z$ and one can identify it with the class of the integral form $\Theta:=c_G\tr(g^{-1}dg\wedge g^{-1}dg \wedge g^{-1}dg )$. Here $c_G$ are normalizing constants computed in \cite{AK1}, for example $c_{SU_N}=-\frac1{96\pi^2N}$. The pullback is
$$
u^*\Theta=c_G\tr(u^{-1}du\wedge u^{-1}du \wedge u^{-1}du )
$$ 
and $\int_Mu^*\Theta$ can be made sense of for Sobolev $u$.

It is instructive to compare our secondary invariant with Postnikov's \cite{Ps,WJ}. His definition requires homotoping $\psi$ into a function $\widetilde{\psi}$ equal to $\pfi$ on the $2$-skeleton of $M$ and computing the primary difference $\overline{d}(\pfi,\widetilde{\psi})\in H^3(M,\pi_3(X))$. Homotopy occurs when this difference takes value in a subgroup with a complicated description that involves the Whitehead product \cite{WG} and the Postnikov square \cite{Nk1,Ps,WJ}. Note that by the homotopy exact sequence
\bee
0=\pi_2(H)\overset{\partial}\lla\pi_3(G/H)\overset{\pi_*}\lla\pi_3(G)\overset{i_*}\lla\pi_3(H)\lla\cdots
\eee
and $\pi_3(X)\simeq\pi_3(G)/\i_*\pi_3(H)$. If $\i_*\pi_3(H)=0$ as in the case of $U_1$ in $SU_2$ our invariant can be identified with Postnikov's. 

In \refS{1} we solve the relative lifting problem for two maps as a problem in  obstruction theory. A key role is played by the {\sl bundle of shifts} that helps characterize existence of the lift in terms of the {\sl primary characteristic class} of the quotient bundle $G\to G/H$. In \refS{2} we show that the primary characteristic class is essentially the basic class $\b_X$ and prove our characterization of $2$-homotopy classes. Finally, in \refS{3} the secondary invariant is introduced and the homotopy classification is completed. We also give a deRham interpretation of the secondary invariant.

\section{Primary characteristic class and lifting}\label{S1}

In this section we will define a cohomology class on $G/H$ that regulates existence of a relative lift $u$ such that $\psi=u\pfi$ for two maps $M\ovs{\psi,\pfi}\lra G/H$. This class is the primary characteristic class \cite{MS,St} of the bundle $G\to G/H$ denoted $\varkappa(G)$. Of course, $\varkappa (G)$ also depends on $H\subset G$ but we follow the usual abuse of notation. The lift exists iff pullbacks of $\varkappa(G)$ by both maps are the same (\refT{1.0}). We will prove this by reducing the lifting problem to a problem in the obstruction theory \cite{MS,St}. In the next section we will identify $\varkappa(G)$ with the primary obstruction to homotopy.

Here is the basic idea of the proof. Given two maps
$M\xra{\pfi ,\psi}X$ define $M\overset{(\pfi,\psi)}\lra X\times X$ 
and consider the {\it ratio bundle} over $M$:
\begin{equation}\label{e0.23}
\bal
Q_{\pfi ,\psi}:=\{(m,g)\in M\times G|\psi(m)=g\pfi(m)\}
\eal
\end{equation}
Obviously, sections of this bundle
$M\overset{\sigma} \lra Q_{\pfi ,\psi}\subset M\times G$ have the form $\sigma (m)=(m,u(m))$,
where $\psi =u\pfi$. In other words, they play the role of non-existent ratios $\psi/\pfi$.
Hence the problem of finding a lift $u$ is equivalent to constructing a section of the bundle $Q_{\pfi ,\psi}$,
which is a standard problem in the obstruction theory.

First, we have to establish that $Q_{\pfi ,\psi}$ are indeed fiber bundles. Note that 
\begin{equation}\label{e0.231}
\bal
Q_{\pfi ,\psi}\simeq\{(m,x,g)\in M \times X\times G|(\pfi (m),\psi(m)=(x,gx)\}
\eal
\end{equation}
and by definition of pullback $Q_{\pfi ,\psi}\simeq(\pfi ,\psi)^*Q$, where $Q$ is the bundle of shifts defined next. A particular case of this bundle was used in \cite{AS} for similar purposes.
\begin{definition}[The bundle of shifts]\label{D:shifts}
The bundle of shifts of a homogeneous space
$G/H=X$ is the fiber bundle $Q$ over $X\times X$ given by:
\begin{equation}\label{shifts}
\begin{aligned}
X\times G\overset{\alpha} \lra X\times X.\\ 
(x,g)\longmapsto (x,gx)
\end{aligned}
\end{equation}
\end{definition}
Thus, it suffices to prove that $Q$ itself is a fiber bundle. We will do more and identify the principal bundle it is associated to. It turns out to be the Cartesian double $G\times G \overset{\pi\times\pi}\lra X\times X $ of the quotient bundle $G\overset{\pi}\lra X=G/H$. This is a principal bundle with the  structure group $H\times H$.
\begin{lemma}\label{L:1.1}
Let $G$ be a compact Lie group, $H\subset G$ a closed
subgroup and $G\overset{\pi}\lra X=G/H$ the corresponding quotient
bundle. Then the bundle of shifts $Q\overset{\alpha}\lra X\times X$ is
a fiber bundle associated to $G\times G\overset{\pi\times \pi}\lra X\times X$.
\end{lemma}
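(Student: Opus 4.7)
The plan is to exhibit $Q$ explicitly as an associated bundle $(G\times G)\times_{H\times H} F$ for an appropriate fiber $F$ with an $H\times H$-action. To find $F$, I first compute the fibers of $\alpha$: the fiber over $(x_0,y_0)\in X\times X$ is $\{g\in G\mid g x_0=y_0\}$, which is a torsor for the isotropy group $\Stab(x_0)\cong H$ and hence homeomorphic to $H$. This strongly suggests taking $F=H$.

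Next I would construct an explicit bundle map. The form of $\beta$ is essentially forced: any element of $Q$ above $(g_1 H,g_2 H)$ must be of the shape $(g_1H,\,g_2 h g_1^{-1})$ for some $h\in H$, since we need $g\cdot g_1H=g_2 H$, equivalently $g_1^{-1}g^{-1}g_2\in H$. So I define
\[
\beta:G\times G\times H\lra Q,\qquad \beta(g_1,g_2,h):=(g_1H,\,g_2 h g_1^{-1}).
\]
One checks immediately that the image lies in $Q$ (because $(g_2 h g_1^{-1})\cdot g_1 H=g_2 H$) and that $\alpha\circ\beta=(\pi\times\pi)\circ\mathrm{pr}_{G\times G}$, so $\beta$ covers the bundle projections. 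A one-line computation
\[
\beta(g_1 k_1,g_2 k_2,h)=(g_1 H,\,g_2 k_2 h k_1^{-1} g_1^{-1})=\beta(g_1,g_2,\,k_2 h k_1^{-1})
\]
identifies the correct left action of $H\times H$ on the fiber $H$, namely
\[
(k_1,k_2)\cdot h := k_2 h k_1^{-1}.
\]
Hence $\beta$ descends to a well-defined continuous map $\bar\beta:(G\times G)\times_{H\times H}H\lra Q$ covering the identity on $X\times X$.

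Finally I would verify that $\bar\beta$ is a bundle isomorphism. Surjectivity: given $(xH,g)\in Q$ with $g\cdot xH=yH$, set $h:=y^{-1}g x$; then $h\in H$ (since $y^{-1}g x\cdot H=y^{-1}(g xH)=y^{-1}yH=H$) and $\bar\beta([x,y,h])=(xH,g)$. Injectivity modulo the equivalence: if $\beta(g_1,g_2,h)=\beta(g_1',g_2',h')$, then $g_1 H=g_1'H$ and $g_2 H=g_2'H$, so $g_1'=g_1 k_1$, $g_2'=g_2 k_2$ for some $(k_1,k_2)\in H\times H$, and the equality of second coordinates forces $h'=k_2^{-1}h k_1$, exactly the equivalence relation just described. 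Continuity of $\bar\beta$ is visible from the formula, and continuity of $\bar\beta^{-1}$ follows from local triviality of $G\to X$, which provides continuous local sections and hence continuous choices of representatives $g_1,g_2,h$.

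The conceptually interesting step, and the only real obstacle, is pinning down the twisted $H\times H$-action $(k_1,k_2)\cdot h=k_2 h k_1^{-1}$ on the fiber; once this is identified, the standard associated bundle machinery supplies the fiber bundle structure of $Q$ and the isomorphism with $(G\times G)\times_{H\times H}H$ without further effort.
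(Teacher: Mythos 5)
Your proposal is correct and takes essentially the same route as the paper: both identify the fiber as $H$ with the twisted left action $(k_1,k_2)\cdot h=k_2hk_1^{-1}$ of $H\times H$ and exhibit the isomorphism $[g_1,g_2,h]\mapsto(g_1H,\,g_2hg_1^{-1})$ onto $Q$, the only cosmetic difference being that the paper writes down the explicit inverse $(x,g)\mapsto[g_1,gg_1,1]$ while you verify bijectivity directly.
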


\begin{proof} Recall that given a principal bundle $P$ over $X$ and a space $F$ 
where the structure group $T$ acts on the left by $\mu$, one can form a set of equivalence classes
\begin{equation}\label{Borel}
P\times_\mu F:=\{[p,f]\in P\times F| (p,f)\sim (pt,\mu(t^{-1})f)\}.
\end{equation}
Then $[p,f]\mapsto\pi(p)$ is a bundle projection that turns
$P\times_\mu F$ into a fiber bundle over $X$ associated to $P$ by $\mu$ \cite{Hus,MS}.

We will construct an explicit isomorphism between $Q$ and the following associated bundle. 
The group $T:=H\times H$ acts on $F:=H$ on the left by 
$$
\begin{aligned}
(H\times H)\times H &\overset{\mu}\lra H\\
((\lambda_1,\lambda_2),h) & \longmapsto\lambda_2h\lambda_1^{-1}
\end{aligned}
$$
Set $E_1:=((G\times G)\times_\mu H\overset{\pi}\lra X)$, $E_2:=Q$ and consider the
following map 
$$
\begin{aligned}
E_1 &\overset{\mathcal{F}}\lra E_2\\
[g_1,g_2,h] & \longmapsto(g_1H,g_2hg_1^{-1})
\end{aligned}
$$
To begin with $\mathcal{F}$ is well defined:
$$
g_1\lambda_1H,g_2\lambda_2,\lambda_2^{-1}h\lambda_1)\longmapsto
(g_1,\lambda_1H, g_2hg_1^{-1})=(g_1H,g_2hg_1^{-1}).
$$ 
The inverse is given by $(x,g)\overset{\mathcal{F}^{-1}}\longmapsto
[g_1,gg_1,1]$, where $g_1H=x$. If $g_1\lambda$ is chosen instead
with $\lambda\in H$ then $[g_1\lambda ,gg_1\lambda
,\lambda^{-1}1\lambda]=[g_1,gg_1,1]$ so $\mathcal{F}^{-1}$ is
well-defined. It is easy to see that it is indeed the inverse to
$\mathcal{F}$.

We claim that both diagrams commute
\bee\label{iso}
\begin{diagram}
E_1&& \rTo{\mathcal{F} }        &&E_2, &   E_2 && \rTo{\mathcal{F}^{-1}}   && E_1   \\
         & \rdTo_{\pi}   &  &\ldTo_{\alpha}&   & & \rdTo_{\alpha}   &                &\ldTo_{\pi}&\\
   &   &        X      & &   &  &                 &         X       &             &     
\end{diagram}
\eee
For instance, 
$$
(\alpha\circ\mathcal{F})([g_1,g_2,h])=\alpha
(g_1H,g_2Hg_1^{-1})=(g_1H,g_2hH)=(g_1H,g_2H)=\pi([g_1,g_2,h]).
$$
Therefore the bundle of shifts $Q=E_2$ is indeed a fiber bundle and $\mathcal{F}$ is a bundle isomorphism.
\end{proof}

For obstruction theory we follow terminology and notation of Steenrod \cite{St}. We say that 
$n$ is the {\it lowest homotopy non-trivial dimension} of $F$ if $\pi_k(F)=0$ for $1\leq k\leq n-1$ but $\pi_n (F)\neq 0$. Assume that in a fiber bundle $F\overset{\i} \hra E\overset{\pi}\lra B$
the base $B$ is a $CW$--complex and the fiber $F$ is {\it homotopy simple} up to this dimension,
i.e. $\pi_1(F)$ acts trivially on $\pi_k(F)$ for $1\leq k\leq n$.
This means that there is no obstruction to constructing a section up to
dimension $n$ and we may assume that $B^{(n)}\overset{\sigma}\lra E$
is already constructed, here $B^{(n)}$ is the $n$-skeleton of $B$.
Let $\Delta \subset B$ be an $(n+1)$ cell of $B$ which we may assume
to be contractible or even a simplex. Choosing a local trivialization we get a map $f_{\sigma}:\partial\Delta\lra F$
that defines an element of $\pi_n (F)$. It turns out that this element does not depend on a choice of trivialization and
$c_\sigma(\Delta):=[f_{\sigma}]\in\pi_n(F)$ is a $\pi_n (F)$-valued cochain
and in fact a cocycle. Its cohomology class
$\overline{c}_\sigma\in H^{n+1}(B,\pi_n (F))$ is called the primary obstruction
to extending $\sigma$. This cohomology class does not even depend on a choice
of $\sigma$ on the $n$-skeleton of $B$ and is an invariant of the bundle
$E\overset{\pi}\lra B$ itself. 
\begin{definition}[Primary characteristic class]\label{D:primchar}
The invariant $\varkappa (E):=\overline{c}_\sigma$ is called the primary characteristic class
of $E$. 
\end{definition}
The characteristic class is natural with respect to the pullback of bundles: 
$$
\varkappa (\pfi^*E)=\pfi^*\varkappa (E)
$$ 
and the Eilenberg extension theorem claims that a section $\sigma$ can be altered
on $B^{(n)}$ so as to be extendable to $B^{(n+1)}$ if and only if $\overline{c}_\sigma=0$.
This completely solves the sectioning problem when $\pi_k(F)=0$ for $n+1\leq k<\dim B$, i.e. there are no further obstructions: a section exists if and only if $\varkappa(E)=0$.

In our case the bundle in question is $H\overset{\i}\hra Q_{\pfi
,\psi}\overset{\pi}\lra M$. The fiber is a Lie group so it is
homotopy simple in all dimensions. The first non-trivial dimension
is $n=1$ as $H$ is connected and $\varkappa (Q_{\pfi
,\psi})\in H^2(M,\pi_1 (H))$. Since $\pi_2(H)=0$ for all finite-dimensional Lie groups
and $\dim M=3$ there are no further obstructions and a section exists
if and only if $\varkappa (Q_{\pfi ,\psi})=0$. Thus, we want to
compute this characteristic class. By naturality $\varkappa (Q_{\pfi
,\psi})=\varkappa((\pfi ,\psi)^*Q)=(\pfi ,\psi)^*\varkappa(Q)$ and we
need to compute $\varkappa$ for the bundle of shifts.

By \refL{1.1} $Q$ is isomorphic to the associated bundle
$\widehat{E}:=\widehat{P}\times_{\widehat{\mu}}H$ with $\widehat{P}=G\times G$
and the action
$$
\begin{aligned}
(H\times H)\times H&\ovs{\widehat{\mu}}\lra H\\
((\lambda_1 ,\lambda_2),h)&\longmapsto \lambda_2 h\lambda_1^{-1}
\end{aligned}
$$
The form of the action suggests that we can 'decompose' $\widehat{E}$ into a
combination of two simpler bundles $E$ and $E'$, namely 
\be
E:=P\times_\mu H\quad \text{with}\quad \mu(\lambda )h:=\lambda h
\ee 
and its dual 
\be 
E':=P\times_{\mu '}H\quad \text{with}\quad \mu'(\lambda)h:=h\lambda^{-1}
\ee 
(in our case $P=G$ and one can multiply on both sides). We will not explain precisely what 
the decomposition means in this case but it should be clear from the proof of \refL{1.2}(ii). Note that $E$ is
bundle isomorphic to $P$ itself by $p\mapsto[p,1]$ so we write $\varkappa(P)$ for $\varkappa(E)$.

\begin{lemma}\label{L:1.2}
Let $P\ovs{\pi}\lra X$ be a principal bundle with
the structure group $H$. Define $\widehat{P}:=(P\times P\lra X\times
X)$, $E$, $E'$, $\widehat{E}$ as above and let $\pi_1$, $\pi_2$
denote the projections from $X\times X$ to the first and the second
components. Then

{\rm(i)} $\varkappa (P)=\varkappa (E)=-\varkappa (E')$.

{\rm (ii)} $\varkappa (\widehat{E})=\pi_2^*\varkappa (P)-\pi_1^*\varkappa (P)$, if in addition $H^k(X,\mathbb{Z})=0$ for $0\leq k\leq n$, where $n$ is the lowest homotopy non-trivial dimension of $H$.
\end{lemma}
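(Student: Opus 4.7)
\emph{Proof plan.}

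For (i), the obvious map $p\mapsto[p,1]$ is a bundle isomorphism $P\xrightarrow{\sim}E$, so $\varkappa(P)=\varkappa(E)$ by naturality. To compare $E$ with $E'$, I would exhibit the fiberwise inversion $\Phi:E\to E'$, $[p,h]\mapsto[p,h^{-1}]$, and check well-definedness: the equivalence $(p\lambda,\lambda^{-1}h)\sim(p,h)$ in $E$ is sent to $(p\lambda,h^{-1}\lambda)\sim(p,h^{-1})$ in $E'$, which is precisely the defining relation for $\mu'$. Since $\Phi$ is fiberwise inversion on the Lie group $H$, and inversion on a topological group induces multiplication by $-1$ on each $\pi_n(H)$, naturality of the primary obstruction applied to $\Phi$ yields $\varkappa(E')=-\varkappa(E)$.

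For (ii), the strategy is a K\"unneth-plus-restriction argument. The hypothesis that $H^k(X,\Z)$ vanishes in the relevant range kills the cross terms (both the tensor and $\mathrm{Tor}$ summands) in the K\"unneth formula, yielding
\[
H^{n+1}(X\times X,\pi_n(H))\;=\;\pi_1^*H^{n+1}(X,\pi_n(H))\,\oplus\,\pi_2^*H^{n+1}(X,\pi_n(H)).
\]
Hence I may write $\varkappa(\widehat{E})=\pi_1^*\alpha+\pi_2^*\beta$ uniquely, and recover $\alpha,\beta$ by pulling back along the slice inclusions: restricting to $\{x_0\}\times X$ kills $\pi_1^*$ and is the identity on $\pi_2^*$, and symmetrically for $X\times\{x_0\}$.

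The two slice identifications $\widehat{E}|_{\{x_0\}\times X}\cong E$ and $\widehat{E}|_{X\times\{x_0\}}\cong E'$ I would verify by direct normalization of representatives in $\widehat{E}=(G\times G)\times_{\widehat\mu}H$. Fixing $p_0\in\pi^{-1}(x_0)$ and acting by $(\lambda_1,\lambda_2)=(\lambda^{-1},1)$ reduces any class $[p_0\lambda,g,h]$ on the first slice to $[p_0,g,h\lambda^{-1}]$; the residual $(1,\mu)$-equivalence then reads $(g,h')\sim(g\mu,\mu^{-1}h')$, which is exactly the defining relation of $E$. The symmetric reduction on the second slice, using $(\lambda_1,\lambda_2)=(1,\lambda^{-1})$, produces $(g,h')\sim(g\mu,h'\mu)$, which matches $E'$. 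Combined with part (i), this gives $\beta=\varkappa(E)=\varkappa(P)$ and $\alpha=\varkappa(E')=-\varkappa(P)$, so $\varkappa(\widehat{E})=\pi_2^*\varkappa(P)-\pi_1^*\varkappa(P)$.

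The main obstacle, to my mind, is the sign in (i): one must argue carefully that the naturality of the primary obstruction under a bundle map covering the identity genuinely accommodates a nontrivial fiber automorphism, and one must invoke the (standard but easy to miss) fact that inversion on a topological group acts by $-1$ on every $\pi_n$. The K\"unneth step in (ii) is routine once the cohomological vanishing is assumed, and the slice identifications are bookkeeping with the $H\times H$-equivalence relation.
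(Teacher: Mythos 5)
Your proposal is correct and follows essentially the same route as the paper: part (i) via the fiberwise inversion $[p,h]\mapsto[p,h^{-1}]$ together with the fact that inversion induces $-1$ on $\pi_n(H)$ (the paper carries out the local-trivialization computation $f_{\sigma'}=f_\sigma^{-1}$ that you defer to the ``naturality'' step), and part (ii) via the K\"unneth splitting of $H^{n+1}(X\times X,\pi_n(H))$ with the slice restrictions identified as $E$ and $E'$, exactly as in the paper. Your normalizations of the $H\times H$--equivalence on the two slices agree with the paper's $\i_1^*\widehat{E}\simeq E'$ and $\i_2^*\widehat{E}\simeq E$, so the signs come out right.
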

\begin{proof}
{\rm(i)} Note that if $\sigma (x)=[p,h]$ gives a section of $E$
then $\sigma '(x)=[p,h^{-1}]$ gives a section of $E'$. Also if
$\Delta\ovs{S_\Delta}\lra P|_\Delta$ is a local section of $P$ then
$$
\bal
\Delta\times F & \ovs{\Phi|_\Delta}\lra(P\times_\mu F)|_\Delta\\
(x,f) & \longmapsto [S_\Delta (x),f]\\
(\pi (p),\mu(\lambda^{-1})f) & \longmapsfrom [p,f],\quad \text{with}\ S_\Delta(\pi(p))=p\lambda,
\eal 
$$
is a local trivialization of the associated bundle.

We choose a section $S_\Delta$ of $P$ and denote $\Phi_\Delta$,
$\Phi^\prime_\Delta$ the corresponding trivializations of $E$, $E'$.
Also if $\sigma$ is the chosen section of $E$ on $B^{(n)}$ then the
$\sigma'$ is the one we choose for $E'$. By definition,
\begin{align*}
f_{\sigma'}(x)=\pi_2\circ\Phi_\Delta^{-1}\circ\sigma'(x) &=\pi_2\circ\Phi_\Delta^{-1}([p,h^{-1}]), \quad\qquad\pi (p)=x\\
&=(\pi(p),\mu'(\lambda^{-1})h^{-1}), \quad\qquad S_\Delta (\pi (p))=S_\Delta(x)=p\lambda\\
&=h^{-1}(\lambda^{-1})^{-1}=(\lambda ^{-1}h)^{-1}=(\mu(\lambda ^{-1})h)^{-1} \\
&=(\pi_2 \circ\Phi_\Delta^{-1}([p,h])^{-1})=(\pi_2\circ\Phi_\Delta^{-1}\circ\sigma(x))^{-1}=f_{\sigma}(x)^{-1}. 
\end{align*}
In other words, $c_{\sigma'}(\Delta)=[f_{\sigma}^{-1}]$. But in $\pi_n(H)$ one has $[o^{-1}]=-[o]$ \cite{Dy} for any $o$ and $\varkappa(E')=\overline{c}_{\sigma'}=-\overline{c}_\sigma =-\varkappa(E)$.

{\rm(ii)}  Under our assumptions the K\"unneth formula and the universal coefficients theorem imply that
\begin{align*}
H^{n+1}(X\times X,\pi_n(H)) &\simeq H^{n+1}(X, \pi_n (H))\oplus H^{n+1}(X,\pi_n(H)),\\
\omega &\longmapsto (\i_1^*\omega ,\i_2^*\omega)\\
\pi_1^*\omega^*+\pi_2^*\omega_2 &\longmapsfrom (\omega_1,\omega_2),
\end{align*}
where $x\ovs{\i_1} \longmapsto (x,x_0)$, $x\ovs{\i_2}\longmapsto (x_0,x)$
for some fixed point $x_0\in X$. Let $p_0\in P$ be any point with $\pi (p_0)=x_0$, then
\begin{align*} 
\i^*_1\widehat{E} &=\{(x,[p,p_0,h])\in X\times \widehat{E}|\ (x,x_0)=(\pi(p),\pi (p_0))\}\\ 
&\simeq \{(x,[p,h])\in X\times E|\ \pi (p)=x\}\simeq E'
\end{align*} 
since $p_0$ is fixed and $\widehat{\mu}$ reduces to $\mu '$ on the
first component. Analogously, $\i^*_2\widehat{E}\simeq E$. Therefore
from naturality and {\rm(i)}
\begin{align*} 
\varkappa
(\widehat{E})&=\pi_1^*\i_1^*\varkappa (\widehat{E})+ \pi_2^*\i_2^*\varkappa
(\widehat{E})=\pi_1^*\varkappa (\i_1^*\widehat{E}) +\pi_2^*\varkappa
(\i_2^*\widehat{E})\\
&=\pi_1^*\varkappa (E')+\pi_2^*\varkappa (E)
=\pi_2^*\varkappa (P)-\pi_1^*\varkappa (P)
\end{align*} 
\end{proof}
Now we are ready for the main result of this section.
\begin{theorem}\label{T:1.0}
Let $X=G/H$ be a simply connected homogeneous space, $M$ be a $3$-dimensional $CW$--complex 
and $M \xra{\psi,\pfi}X$ be continuous maps. Then a continuous $M\ovs{u}\lra G$
with $\psi=u\pfi$ exists if and only if 
$$
\psi^*\varkappa(G)=\pfi^*\varkappa (G),
$$
where $\varkappa (G)$ is the primary characteristic class of the quotient bundle $G\to X$.
\end{theorem}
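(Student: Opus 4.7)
The plan is to convert the lifting problem into a sectioning problem for the ratio bundle $Q_{\pfi,\psi}$ and to compute the only relevant obstruction by combining Lemmas \ref{L:1.1} and \ref{L:1.2}. As observed right after \eqref{e0.23}, continuous lifts $u$ with $\psi=u\pfi$ are in bijection with continuous sections of $H\hra Q_{\pfi,\psi}\lra M$, so the task is to decide when such a section exists.

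Next I would set up the obstruction theory for this bundle. The fiber $H$ is a connected Lie group, hence homotopy simple in every dimension, and $\pi_2(H)=0$ for any finite-dimensional Lie group. Taking the first potentially nontrivial dimension to be $n=1$, the primary characteristic class lives in $H^2(M,\pi_1(H))$. Any higher obstruction would live in $H^{k+1}(M,\pi_k(H))$ for $k\geq 2$, and these groups vanish either because $\pi_2(H)=0$ or because $\dim M=3$ forces $H^{k+1}(M,-)=0$ for $k\geq 3$. By the Eilenberg extension theorem a section therefore exists if and only if $\varkappa(Q_{\pfi,\psi})=0$.

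It remains to evaluate this class. By \refL{1.1} we have $Q_{\pfi,\psi}\simeq(\pfi,\psi)^*Q$, and naturality of $\varkappa$ gives
$$
\varkappa(Q_{\pfi,\psi})=(\pfi,\psi)^*\varkappa(Q).
$$
Since $X=G/H$ is simply connected, $H^1(X,\Z)=0$, so the hypothesis of \refL{1.2}(ii) holds with $n=1$, and applied to $P=G$ it yields
$$
\varkappa(Q)=\pi_2^*\varkappa(G)-\pi_1^*\varkappa(G).
$$
Because $\pi_1\circ(\pfi,\psi)=\pfi$ and $\pi_2\circ(\pfi,\psi)=\psi$, pulling back gives
$$
\varkappa(Q_{\pfi,\psi})=\psi^*\varkappa(G)-\pfi^*\varkappa(G),
$$
and the theorem follows.

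The main obstacle I expect is bookkeeping around the convention for the ``lowest homotopy non-trivial dimension'' when $\pi_1(H)=0$: in that case one would naively take $n\geq 3$, and the cohomological vanishing required by \refL{1.2}(ii) becomes more stringent than what $X$ simply connected supplies. However, if $\pi_1(H)=0$ then $\varkappa(G)\in H^2(X,\pi_1(H))$ vanishes automatically and the obstruction $\varkappa(Q_{\pfi,\psi})\in H^2(M,\pi_1(H))$ does too, so both sides of the theorem become trivial and the statement holds; working with $n=1$ throughout therefore causes no loss. The only real calculation is the sign-tracking in \refL{1.2} that delivers the difference $\psi^*\varkappa(G)-\pfi^*\varkappa(G)$, which is precisely why the relative formulation in terms of two maps is natural.
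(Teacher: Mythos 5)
Your proof is correct and follows essentially the same route as the paper's: reduce the lifting problem to sectioning the ratio bundle $Q_{\pfi,\psi}\simeq(\pfi,\psi)^*Q$, note that $\pi_2(H)=0$ and $\dim M=3$ leave only the primary obstruction so a section exists iff $\varkappa(Q_{\pfi,\psi})=0$, and evaluate that class via \refL{1.1}, naturality, and \refL{1.2}(ii) with $n=1$. Your closing remark on the degenerate case $\pi_1(H)=0$ is a sensible extra check that the paper leaves implicit.
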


\begin{proof}
In our case $P$ is the quotient bundle $G\lra X$ and we write
$\varkappa (G)$ for its primary characteristic class. It is easy to compute
$\varkappa (Q_{\pfi ,\psi})$ now since $Q_{\pfi ,\psi}=(\pfi
,\psi)^*Q$ and $Q=\widehat{E}$ for the quotient bundle $G\lra X$:
\begin{align*}
\varkappa (Q_{\pfi ,\psi}) &=\varkappa ((\pfi ,\psi)^*Q)=(\pfi ,\psi)^*\varkappa (Q)) && \text{by  naturality}\\
&=(\pfi ,\psi)^*(\pi_2^*\varkappa (G)-\pi_1^*\varkappa (G)) && \text{by \refL{1.2}}\\
&=(\pi_2\circ (\pfi ,\psi))^*\varkappa (G)-(\pi_1\circ (\pfi,\psi))^*\varkappa (G) &&\\
&=\psi^*\varkappa (G)-\pfi^*\varkappa(G).
\end{align*}
\end{proof}
In fact the conditions of \refL{1.2} are satisfied with
$n=1$ if $H$ is connected and $X$ is simply connected (simple
connectedness of $G$ is not necessary). Hence \refT{1.0} can be
applied directly to $U_n$ homogeneous spaces without reducing them
to $SU_n$ ones as long as the subgroup $H\subset U_n$ is already connected.

\section{Characterization of $2$-homotopy classes}\label{S2}

In the previous section we reduced the lifting problem to equality of pullbacks of the primary characteristic class. By obstruction theory the primary obstruction to homotopy is described in the same fashion. It turns out that this is not a coincidence and the primary characteristic class of $G\to G/H$ is essentially the same as the basic class of $G/H$. As a consequence, a lift $u$ in $\psi=u\pfi$ exists iff $\psi$ and $\pfi$ are $2$-homotopic (\refT{1.1}). 

As before we follow terminology and notation of Steenrod \cite{St}. Let $B$ be a $CW$--complex and
$B\ovs{\psi,\pfi}\lra F$ be two maps homotopic on $B^{(n-1)}$ by $\Phi$. If $\Delta\subset B^{(n)}$ is an $n$-cell
then $\partial(\Delta\times I)\simeq S^n$ and we can set
\be 
d_\Phi (\pfi ,\psi)(\Delta):=[\Phi(\partial(\Delta\times
I))]\in\pi_n (F) 
\ee 
This defines a $\pi_n (F)$--valued cochain on $B$ called {\it the difference cochain}. It turns out to
be a cocycle and its cohomology class 
$$
\overline{d}(\pfi,\psi):=\overline{d_\Phi (\pfi ,\psi)}
$$ 
does not depend on a choice of homotopy on $B^{(n-1)}$. Obviously $\overline{d}(\pfi,\psi)\in H^n (B,\pi_n (F))$. 
The homotopy $\Phi$ can be extended from $B^{(n-2)}$ to $B^{(n)}$ (it may have to be altered on
$B^{(n-1)}$) if and only if $\overline{d}(\pfi ,\psi)=0$. The difference
is natural $\overline{d} (\pfi\circ f,\psi\circ f)=f^*\overline{d}(\pfi,\psi)$ and additive
$\overline{d}(\pfi ,\chi)=\overline{d}(\pfi,\psi)+\overline{d}(\psi ,\chi)$.
Since $\pfi$ is always homotopic to itself $\overline{d} (\pfi ,\pfi)=0$ and additivity implies 
$\overline{d}(\psi,\pfi)=-\overline{d}(\pfi ,\psi)$. 

Now let $n$ be the lowest homotopy non-trivial dimension of $F$ and $F$ be homotopy simple up to this
dimension. Then any two maps into $F$ are homotopic on $B^{(n-1)}$
and ${\overline{d}(\pfi ,\psi)}$ is defined for any pair. It is called
{\it the primary difference} between $\pfi$ and $\psi$ \cite{St}. 
\begin{theorem*}[Eilenberg classification theorem] 
If the primary difference is the only obstruction to homotopy i.e. 
$\pi_k(F)=0\quad \text{for}\quad n+1\leq k\leq\dim B$,
then $\pfi,\psi$ are homotopic if and only if\ $\overline{d}(\pfi ,\psi)=0$. 
Moreover, for any $\omega\in H^n (B,\pi_n
(F))$ and a given $B\ovs{\pfi}\lra F$ there is $B\ovs{\psi}\lra F$
such that\ $\overline{d}(\pfi,\psi)=\omega$. 
\end{theorem*}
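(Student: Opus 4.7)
Plan: The statement naturally decomposes into three claims---necessity of $\overline{d}(\pfi,\psi)=0$ for homotopy, sufficiency of this condition, and the realization of a prescribed cohomology class---each of which follows by the standard skeletal induction of obstruction theory, using only the cochain properties of $\overline{d}$ recalled above and the vanishing hypothesis $\pi_k(F)=0$ for $n+1\le k\le\dim B$.

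For necessity, suppose $\pfi\simeq\psi$ via a homotopy $\Psi:B\times I\lra F$. Its restriction $\Phi:=\Psi|_{B^{(n-1)}\times I}$ is an admissible homotopy on the $(n{-}1)$-skeleton, so it may be used to compute $d_\Phi(\pfi,\psi)$. But for each $n$-cell $\Delta$ the full $\Psi$ already extends $\Phi$ across $\Delta\times I$, providing a nullhomotopy of the map $\partial(\Delta\times I)\lra F$ whose class defines $d_\Phi(\pfi,\psi)(\Delta)\in\pi_n(F)$. Hence the difference cochain vanishes identically, so $\overline{d}(\pfi,\psi)=0$.

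For sufficiency, I would apply the extendability criterion recalled just before the theorem: since $\overline{d}(\pfi,\psi)=0$, the constant homotopy on $B^{(n-2)}$ can be altered on $B^{(n-1)}$ and then extended to a homotopy $\Phi:B^{(n)}\times I\lra F$ between $\pfi$ and $\psi$ on $B^{(n)}$. I would then proceed by induction on $k\ge n$: to extend $\Phi$ over a $(k{+}1)$-cell $\Delta$, the obstruction is the class in $\pi_{k+1}(F)$ of the map $\partial(\Delta\times I)\cong S^{k+1}\lra F$ obtained from $\Phi$ on $(\partial\Delta\times I)\cup(\Delta\times\partial I)$. The hypothesis $\pi_{k+1}(F)=0$ for $n\le k\le\dim B-1$ kills every such obstruction, so the induction extends $\Phi$ to all of $B\times I$.

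For realization, fix a cocycle representative $c\in Z^n(B;\pi_n(F))$ of $\omega$ and build $\psi$ cell by cell, starting from $\psi:=\pfi$ on $B^{(n-1)}$. For each $n$-cell $\Delta$ with characteristic map $\chi_\Delta:(D^n,S^{n-1})\lra(\Delta,\partial\Delta)$, use a collar/pinching construction on $D^n$ to modify $\pfi\circ\chi_\Delta$ in the interior by a representative of $c(\Delta)\in\pi_n(F)$, producing $\psi\circ\chi_\Delta$ that still agrees with $\pfi\circ\chi_\Delta$ on $S^{n-1}$. Taking the constant homotopy on $B^{(n-1)}$ then gives, by the construction, $d(\pfi,\psi)(\Delta)=c(\Delta)$, so $\overline{d}(\pfi,\psi)=[c]=\omega$. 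Extension of $\psi$ from $B^{(n)}$ to the higher skeleta is unobstructed because the successive cellular obstructions live in $\pi_k(F)=0$ for $n+1\le k\le\dim B$. The main obstacle is precisely in this realization step: producing the cellular modification so that the values on different $n$-cells patch consistently and the resulting primary difference is the prescribed cocycle on the nose; the necessity and sufficiency halves are comparatively formal once the cochain-level machinery is in place.
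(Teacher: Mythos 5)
The paper itself does not prove this statement; it is quoted from Steenrod \cite{St} as standard obstruction theory, so your argument has to stand on its own. Your necessity and sufficiency halves do: restricting a global homotopy to $B^{(n-1)}\times I$ makes each value $d_\Phi(\pfi,\psi)(\Delta)$ the class of a map $\partial(\Delta\times I)\cong S^n\to F$ that extends over the disk $\Delta\times I$, hence zero; and once $\overline{d}(\pfi,\psi)=0$ produces a homotopy on $B^{(n)}$, the obstruction to pushing it across a $(k+1)$-cell is a class in $\pi_{k+1}(F)$, which the hypothesis kills for $n+1\le k+1\le\dim B$. Those two parts are fine.

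The realization step, however, has a genuine gap, and it is not the one you flag. Making the cellular modifications patch is unproblematic: distinct $n$-cells have disjoint interiors and you leave $B^{(n-1)}$ untouched, so $d(\pfi|_{B^{(n)}},\psi)(\Delta)=c(\Delta)$ comes out of the pinch construction cell by cell. The real problem is your last sentence. The obstruction to extending the modified map $\psi$ from $B^{(n)}$ over an $(n+1)$-cell is the class of $\psi$ on the attaching sphere $S^n$, which lives in $\pi_n(F)$ --- and $\pi_n(F)\ne 0$ by standing assumption, so this first extension step is \emph{not} covered by the vanishing hypothesis, which only begins at $\pi_{n+1}(F)$. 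This is precisely where the cocycle condition on $c$, which you choose but never use, must enter: for two maps agreeing on $B^{(n-1)}$ the obstruction cocycles satisfy $o(\psi)-o(\pfi|_{B^{(n)}})=\pm\,\delta\, d(\pfi|_{B^{(n)}},\psi)=\pm\,\delta c=0$, and $o(\pfi|_{B^{(n)}})=0$ because $\pfi$ is defined on all of $B$; hence $o(\psi)=0$ and $\psi$ extends over $B^{(n+1)}$, after which the hypotheses $\pi_k(F)=0$ do take over. Without this difference-cochain/coboundary identity, your construction could a priori produce a $\psi$ on $B^{(n)}$ admitting no extension at all, and the realization claim would fail.
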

In other words, under the conditions of the theorem, maps are classified up to homotopy 
by their primary differences with a fixed map $\pfi$, and there is a one-to-one correspondence 
between homotopy classes and $H^n (B,\pi_n(F))$. In our
case $B=M$, $F=X$, $n=2$ since $X$ is simply connected and $q=1$
since generally speaking $\pi_3 (X)\neq 0$. So $M\ovs{\psi,\pfi}\lra X$ are $2$-homotopic 
if and only if $\overline{d}(\pfi,\psi)=0$. We will reexpress this condition first in terms of the basic class 
and then of the primary characteristic class.

For any connected space $F$ there are two
special self-maps, the identity $\id_F$ and the constant map
$\mbox{pt}_F (x)=x_0\in F$. The primary difference $\overline{d}(\id_F,
pt_F)$ only depends on $F$ itself since all constant maps into a connected space are homotopic to each other. 
Note that $\overline{d}(\mbox{id}_F,\mbox{pt}_F)\in H^n(F,\pi_n(F))$ and one can show \cite{St} that
\be
\overline{d}(\id_F,\pt_F)=\b_F
\ee
Now let $M\ovs{\psi, \pfi}\lra X$ be any continuous maps and
$M\ovs{\pt_{M,X}}\lra X$ be a constant map. Then by naturality and
additivity
\begin{equation}\label{e0.25}
\overline{d}(\pfi,\psi)=\pfi^*\overline{d}(\id_X,\pt_X)-\psi^*\overline{d}(\id_X,\pt_X)=\pfi^*\b_X-\psi^*\b_X.
\end{equation}
In general, $\varkappa (G)\in H^2(X,\pi_1 (H))$  and $\b_X\in H^2(X,\pi_2(X))$ but from \eqref{e0.22} we have $\pi_1(H)\simeq\pi_2(X)$ under the connecting homomorphism $\partial$. This suggests that $\varkappa (G)=\pm\partial\circ\b_X$. To prove the equality we need to use the transgression \cite{MT,St}.
\begin{definition}[Transgression]
Let $F\ovs{\i}\hra E\ovs{\pi}\lra B$ be a fiber bundle and
$\mathbb{A}$ an Abelian group. An element $\alpha\in
H^n(F,\mathbb{A})$ is called transgressive if there are cochains
$\xi\in C^n(E,\mathbb{A})$ and $\eta\in C^{n+1}(B,\mathbb{A})$ such
that $\overline{\i^*\xi}=\alpha$ and $\delta\xi=\pi^*\eta$,
where the bar denotes the corresponding cohomology
class and $\delta$ is the cohomology differential. When $\alpha$ is
transgressive, classes $\tau ^{\#}\alpha :=\overline{\eta}\in
H^{n+1}(B,\mathbb{A})$ are called its (cohomology) transgressions.
\end{definition}
\noindent Our definition follows Steenrod, but the reader is cautioned that there is another tradition in differential geometry, where the transgression goes the opposite way. There is also an analogous notion of transgression $\tau_{\#}$ in homology and the two are dual to each other, i.e. when $\alpha$ and $a$ are transgressive $\tau^\#\alpha(a)=\alpha (\tau_\# a)$. Unlike the connecting homomorphism $\partial$ which is everywhere
defined and unambiguous, both transgressions $\tau ^{\#},\tau_\#$ in general
map from a subgroup to a quotient of the corresponding (co)homology groups. The homology transgression in a sense  imitates the non-existent connecting homomorphism in homology. In particular,
spherical classes in $H_{n+1}(B,\mathbb{Z})$ are always
transgressive and the diagram
\bee\label{e0.29}
\begin{diagram}
\pi_{n+1}(B)           & \rTo{\partial}   &\pi_n(F)            \\
\dTo{\mathcal{H}_B}      &                  &\dTo{\mathcal{H}_F} \\
H_{n+1}(B,\mathbb{Z})  & \rTo{\tau_\#}     &H_n(F,\mathbb{Z})
\end{diagram}
\eee
commutes. Here $\mathcal{H}_B$, $\mathcal{H}_F$ are Hurewicz
homomorphisms and it is understood that $\mathcal{H}_F(\partial
(z))$ is just one of the transgressions of $\mathcal{H}_B(z)$.
Commutativity can be established by inspecting the definitions of
$\tau_\#$ and $\partial$ (see \cite{Hu}).

There is a case when the transgression is unambiguous. When $H^i
(B,\mathbb{A})=0$ for $0<i <k$ and $H^j(F,\mathbb{A})=0$ for $0<j<l$
a result of J.-P. Serre says that $H^m(F,\mathbb{A})\ovs{\tau^\#}\lra
H^{m+1}(B,\mathbb{A})$ is well-defined and one has the {\it Serre exact
sequence} \cite{MT}:
\bee\label{e0.30}
0\lra H^1(B,\mathbb{A})\ovs{\pi^*}\lra
H^1(E,\mathbb{A})\ovs{\i^*}\lra H^1(F,\mathbb{A})\ovs{\tau^\#}\lra
H^2(B,\mathbb{A})\ovs{\pi^*}\lra ...\ovs{\i^*}\lra
H^{k+l-1}(F,\mathbb{A}).
\eee
An analogous statement is also true for the
homology transgression. Conditions of the Serre exact sequence are
satisfied in particular if $n$, $n+1$ are the lowest homotopy
non-trivial dimensions for $F$ and $B$ respectively and $k=n+1$,
$l=n$. Under these assumtions the primary characteristic class is related straightforwardly to the basic class of the base.
\begin{lemma}\label{L:1.3}
Let $F\ovs{\i}\hra E\ovs{\pi}\lra B$ be a fiber bundle with the fiber $F$ being
homotopy simple up to dimension $n$  and let $n$, $n+1$ be the
lowest homotopy non-trivial dimensions of $F$ and $B$ respectively. Then
\bee\label{e0.32}
\varkappa (E)=-\partial\circ \b_B,
\eee
where $\pi_{n+1}(B)\ovs{\partial}\lra\pi_n (F)$ is the connecting
homomorphism (cf. \cite{Nk2}).
\end{lemma}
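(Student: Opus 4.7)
The plan is to identify both sides as values of the Serre cohomology transgression and then chase a commutative diagram. Under the hypotheses of the lemma, $\pi_j(B)=0$ for $j\le n$ and $\pi_k(F)=0$ for $k<n$, so by Hurewicz $H^j(B,\pi_n(F))=0$ for $0<j\le n$ and $H^k(F,\pi_n(F))=0$ for $0<k<n$. The Serre exact sequence \eqref{e0.30} then applies with $k=n+1$, $l=n$, and the cohomology transgression $\tau^\#\colon H^n(F,\pi_n(F))\to H^{n+1}(B,\pi_n(F))$ is well-defined and single-valued on $\b_F$.

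The first step I would carry out is the identification
$$\varkappa(E)=-\tau^\#\b_F.$$
This is essentially a translation of Steenrod's definition of the primary obstruction into transgression language: for an $(n+1)$-cell $\Delta\subset B$ with chosen section $\sigma$ on $B^{(n)}$, the obstruction class $c_\sigma(\Delta)=[f_\sigma]\in\pi_n(F)$ is read off as the coboundary of a cochain on $E$ whose restriction to the fiber represents $\b_F$, which is exactly the defining property of a transgressing cochain. The sign is dictated by the orientation of $\partial(\Delta\times I)$ used in building $f_\sigma$; modulo sign, the identification is the content of Steenrod's \S35--37.

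Once this is in hand, the remainder is a diagram chase using the duality $\tau^\#\alpha(a)=\alpha(\tau_\# a)$ for transgressive pairs. Since $\pi_i(B)=0$ for $i\le n$, the Hurewicz map $\mathcal{H}_B\colon\pi_{n+1}(B)\to H_{n+1}(B,\mathbb{Z})$ is an isomorphism, so every homology class $a\in H_{n+1}(B,\mathbb{Z})$ is spherical and we may write $a=\mathcal{H}_B(z)$. Commutativity of \eqref{e0.29} then gives $\tau_\# a=\mathcal{H}_F(\partial z)$, and by the defining properties $\b_F\circ\mathcal{H}_F=\mathrm{id}_{\pi_n(F)}$ and $\b_B\circ\mathcal{H}_B=\mathrm{id}_{\pi_{n+1}(B)}$,
$$\tau^\#\b_F(a)=\b_F(\tau_\# a)=\b_F(\mathcal{H}_F(\partial z))=\partial z=\partial(\b_B(a)).$$
Hence $\tau^\#\b_F=\partial\circ\b_B$ as elements of $H^{n+1}(B,\pi_n(F))$, and combined with the first step this gives $\varkappa(E)=-\partial\circ\b_B$.

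The principal obstacle is the sign-correct identification $\varkappa(E)=-\tau^\#\b_F$. Modulo sign this is folklore, but matching Steenrod's combinatorial obstruction cocycle to a specific transgressing cocycle on a single $(n+1)$-cell is the only part of the argument that is not a formal consequence of Hurewicz, Serre exactness, and the duality of transgressions already quoted in the text above.
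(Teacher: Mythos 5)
Your overall route is the same as the paper's: reduce to the identity $\varkappa(E)=-\tau^{\#}\b_F$ and then chase the duality $\tau^{\#}\alpha(a)=\alpha(\tau_{\#}a)$ together with the commutative square \eqref{e0.29} relating $\tau_{\#}$ to $\partial$ via the Hurewicz maps. Two points, however, keep the proposal from being a complete proof. First, the identification $\varkappa(E)=-\tau^{\#}\b_F$, which you correctly isolate as the ``principal obstacle,'' is exactly the Whitehead transgression theorem; the paper does not re-derive it but cites it (Steenrod, see also Borel--Hirzebruch, Appendix 1). Your sketch of matching the obstruction cocycle $c_\sigma(\Delta)=[f_\sigma]$ to a transgressing cochain is the right idea, but you concede that the sign is not pinned down, and the sign is the entire content of the displayed formula \eqref{e0.32}. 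As written, you have proved the lemma only up to sign; either invoke the theorem by name or carry out the cell-by-cell comparison.

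Second, there is an unflagged gap at the very end: from $\tau^{\#}\b_F(a)=\partial(\b_B(a))$ for every $a\in H_{n+1}(B,\mathbb{Z})$ you conclude $\tau^{\#}\b_F=\partial\circ\b_B$ in $H^{n+1}(B,\pi_n(F))$. Equality of pairings with integral homology determines a cohomology class only when the evaluation map $H^{n+1}(B,\pi_n(F))\to\Hom(H_{n+1}(B,\mathbb{Z}),\pi_n(F))$ is injective, i.e.\ when the $\Ext(H_n(B,\mathbb{Z}),\pi_n(F))$ term in the universal coefficients sequence vanishes. The paper supplies this by noting that $n+1$ is the lowest homotopy non-trivial dimension of $B$, so $H_n(B,\mathbb{Z})=0$ by Hurewicz and the $\Ext$ term dies. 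You should add this observation; without it the pairing computation does not yield equality of classes.
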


\begin{proof}
By the universal coefficients theorem:
$$
0\lra \Ext(H_n(B,\mathbb{Z}),\pi_n(F))\lra H^{n+1}(B,\pi_n(F))\lra
\Hom (H_{n+1}(B,\mathbb{Z}),\pi_n(F))\lra 0
$$
is exact and since $n+1$ is the lowest homotopy non-trivial
dimension of $B$ the group $H_n(B,\mathbb{Z})=0$ and the $\Ext$
term vanishes. Hence the elements of $H^{n+1}(B,\pi_n(F))$ are
completely determined by their pairing with integral homology
classes. By the Serre exact sequences both transgressions
$H^n(F,\pi_n(F))\ovs{\tau^\#}\lra H^{n+1}(B,\pi_n(F))$ and
$H_{n+1}(B,\mathbb{Z})\ovs{\tau_\#}\lra H_n(F,\mathbb{Z})$ are
unambiguous and the Whitehead transgression theorem \cite{St} (see also \cite{BH}, Appendix 1) gives $\varkappa
(E)=-\tau^\# \b_F$. Using also the duality of transgressions and \eqref{e0.29} 
\be
\bal \varkappa
(E)(a)&=-\tau^\# \b_F(a)=-\b_F(\tau_\# a)=-\mathcal{H}_F^{-1}(\tau_\#
a)=-\partial
(\mathcal{H}_B^{-1}(a))\\
&=-\partial (\b_B(a))=-\partial\circ \b_B (a).\eal
\ee
Since $a\in
H_{n+1}(B,\mathbb{Z})$ is arbitrary (all elements are spherical by the Hurewicz theorem and
hence transgressive) we get \eqref{e0.32}.
\end{proof}
Recall that the basic class regulates $2$-homotopy and the primary characteristic class 
regulates existence of a lift between two maps into $G/H$. We now establish the desired equivalence.
\begin{theorem}\label{T:1.1}
Let $X=G/H$ be a compact simply connected homogeneous space and $M$ a
$3$-dimensional $CW$--complex. Then three conditions are equivalent
for continuous $M\ovs{\psi ,\pfi} \lra X$:

{\rm(i)}\ $\pfi$, $\psi$ are $2$-homotopic, i.e. homotopic on the
$2$-skeleton of $M$;

{\rm(ii)}\ $\psi^*\b_X=\pfi^*\b_X\in H^2(M,\pi_2(X))$, where $\b_X$ is the
basic class of $X$;

{\rm (iii)}\ There exists a continuous $M\ovs{u}\lra G$ such that
$\psi =u\pfi$, where $u\pfi$ refers to the action of $G$ on $X$.
\end{theorem}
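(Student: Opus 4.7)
The plan is to deduce the three equivalences from three inputs already in hand: the Eilenberg classification theorem applied with $B=M$, $F=X$, $n=2$; \refT{1.0}; and \refL{1.3}. The strategy is to recast (i) and (iii) as cohomological equalities for $\b_X$ and $\varkappa(G)$ respectively, and then use \refL{1.3} to identify these two classes up to a sign and the connecting homomorphism $\partial$ of the quotient bundle.

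For (i)$\Leftrightarrow$(ii), Eilenberg's theorem with $n=2$ says that $\pfi,\psi$ are $2$-homotopic iff the primary difference $\overline{d}(\pfi,\psi)\in H^2(M,\pi_2(X))$ vanishes. Identity \eqref{e0.25} rewrites this as $\pfi^*\b_X-\psi^*\b_X=0$, which is exactly (ii).

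For (ii)$\Leftrightarrow$(iii), \refT{1.0} restates (iii) as the equality $\psi^*\varkappa(G)=\pfi^*\varkappa(G)$ in $H^2(M,\pi_1(H))$. To transfer this back to $\b_X$ I apply \refL{1.3} to the quotient bundle $H\hookrightarrow G\to X$. Because $G$ is simply connected and $\pi_2(G)=0$ for all Lie groups, the homotopy long exact sequence of this bundle forces $\partial\colon\pi_2(X)\to\pi_1(H)$ to be an isomorphism. In the non-degenerate case $\pi_1(H)\neq 0$, this means $n=1$ is the lowest homotopy non-trivial dimension of $H$ and $n+1=2$ that of $X$, so \refL{1.3} gives $\varkappa(G)=-\partial\circ\b_X$; by naturality,
\begin{equation*}
\psi^*\varkappa(G)-\pfi^*\varkappa(G) \;=\; -\partial\circ\bigl(\psi^*\b_X-\pfi^*\b_X\bigr).
\end{equation*}
Since $\partial$ is an isomorphism of coefficient groups, it induces an isomorphism on $H^2(M,\cdot)$, and the two vanishings are equivalent.

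The remaining degenerate case $\pi_1(H)=0$ is trivial: the exact sequence forces $\pi_2(X)=0$, so the coefficient groups for both $\varkappa(G)$ and $\b_X$ vanish; (ii) and (iii) then hold automatically (the latter by \refT{1.0}), while (i) holds because $\pi_1(X)=\pi_2(X)=0$ leaves no obstruction on the $2$-skeleton of $M$. The main bookkeeping hurdle is verifying the lowest-dimension hypothesis of \refL{1.3}, which is precisely where the standing assumptions that $G$ is simply connected and $H$ is connected pay off; once that is cleared, the rest is naturality of $\partial$ and pullback.
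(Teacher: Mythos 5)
Your proof is correct and follows essentially the same route as the paper: the Eilenberg classification theorem for (i)$\Leftrightarrow$(ii), and \refT{1.0} combined with \refL{1.3} and the isomorphism $\partial\colon\pi_2(X)\to\pi_1(H)$ coming from the homotopy exact sequence for (ii)$\Leftrightarrow$(iii). Your explicit treatment of the degenerate case $\pi_1(H)=0$, where the lowest-nontrivial-dimension hypothesis of \refL{1.3} fails, is a small point of care that the paper's own proof passes over silently.
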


\begin{proof}
Equivalence of the first two conditions is just a particular case of the Eilenberg classification theorem. To prove {\rm (iii)} we apply \refL{1.3} to the bundle $H\hra G\lra X=G/H$ with $n=1$ since $H$ is connected and get $\varkappa(G)=-\partial\circ \b_X\in H^2 (X,\pi_1(H))$. Since $\pi_2(G)=0$ for any finite-dimensional Lie group we have from the homotopy exact sequence
\bee\label{e0.22}
0=\pi_1(G)\lla\pi_1(H)\overset{\partial}\lla\pi_2(G/H)\lla\pi_2(G)=0
\eee
that the connecting homomorphism is an isomorphism. Since it also commutes with pullbacks $\psi^*\b_X=\pfi^*\b_X$ if and only if $\psi^*\varkappa(G)=\pfi^*\varkappa (G)$. Application of \refT{1.0} now concludes the proof.

\end{proof}

\section{Secondary invariant and homotopy classes}\label{S3}

By the Eilenberg classification theorem,  maps $M\to X$ are $2$--homotopic if and only if they have the same pullbacks of the basic class $\b_X$. This pullback $\pfi^*\b_X$ is the primary invariant of a map $\pfi$. If $\pi_3(X)=0$ then the $2$-homotopy class is already the homotopy class (recall that we only consider a $3$--dimensional $M$). Otherwise, secondary invariants have to be specified. Unlike the primary invariants, classical secondary invariants are not defined constructively \cite{MT}. From \refT{1.1} we know that even $2$-homotopy of $\psi$ and $\pfi$ implies that $\psi=u\pfi$. In this section we derive an explicit characterization for such $u$ in terms of $u^*\b_G$, where $\b_G$ is the basic class of $G$. In other words, we are using $u^*\b_G$ as a secondary invariant of a pair $\psi,\pfi$ while for the lift $u$ it is a primary invariant and is defined straightforwardly.

We start with a simple observation that follows directly from the homotopy lifting property in the bundle of shifts.
\begin{lemma}\label{L:1.4}
Let $G$ be a compact connected Lie group, $H\subset G$ a closed subgroup,
$X=G/H$ and $M$ a $CW$--complex. Then two continuous maps
$M\xra{\pfi ,\psi}X$ are homotopic if and only
if there exists a nullhomotopic $M\overset{u_0} \lra G$ such that
$\psi =u_0\pfi$. Given an arbitrary map $M\overset{u}\lra G$ maps
$\pfi$, $u\pfi$ are homotopic if and only if $u=u_0w$, where $u_0$
is nullhomotopic and $w\pfi =\pfi$.
\end{lemma}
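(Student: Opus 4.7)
The plan is to derive the lemma directly from the homotopy lifting property (HLP) for the bundle of shifts $Q\ovs{\alpha}\lra X\times X$, which is a fiber bundle by \refL{1.1} and hence a Serre fibration (good enough since $M$ is a CW--complex).

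For the first assertion, the ($\Leftarrow$) direction is immediate: if $u_0$ is nullhomotopic via $v_s:M\lra G$ with $v_0\equiv 1$ and $v_1=u_0$, then $\Phi_s:=v_s\pfi$ is a continuous homotopy from $\pfi$ to $u_0\pfi=\psi$. For the ($\Rightarrow$) direction, let $\Phi:M\times I\lra X$ be a homotopy with $\Phi_0=\pfi$, $\Phi_1=\psi$, and consider the map
\be
\tildeG:M\times I\lra X\times X,\qquad \tildeG(m,t)=(\pfi(m),\Phi(m,t)).
\ee
Define $f:M\lra X\times G$ by $f(m)=(\pfi(m),1)$; then $\alpha\circ f(m)=(\pfi(m),\pfi(m))=\tildeG(m,0)$. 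Apply HLP to the fibration $\alpha$ to obtain $\tildeG':M\times I\lra X\times G$ with $\tildeG'|_{t=0}=f$ and $\alpha\circ\tildeG'=\tildeG$. Because the first coordinate of $\alpha$ is projection, $\tildeG'$ is forced to have the form $\tildeG'(m,t)=(\pfi(m),u(m,t))$ for some continuous $u:M\times I\lra G$; the equation $\alpha\circ\tildeG'=\tildeG$ then reads $u(m,t)\pfi(m)=\Phi(m,t)$, while the initial condition says $u(m,0)=1$. Setting $u_0(m):=u(m,1)$ gives $u_0\pfi=\psi$, and $u(\cdot,t)$ realizes a nullhomotopy of $u_0$ to the constant map $1$.

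The second assertion is a purely algebraic consequence. ($\Leftarrow$) If $u=u_0w$ with $w\pfi=\pfi$ and $u_0$ nullhomotopic, then $u\pfi=u_0w\pfi=u_0\pfi$, which is homotopic to $\pfi$ by the first assertion. ($\Rightarrow$) Conversely, if $\pfi$ and $u\pfi$ are homotopic, apply the first assertion with $\psi:=u\pfi$ to produce a nullhomotopic $u_0:M\lra G$ with $u_0\pfi=u\pfi$. Then $w:=u_0^{-1}u$ is continuous and satisfies $w\pfi=u_0^{-1}u\pfi=u_0^{-1}u_0\pfi=\pfi$, while tautologically $u=u_0w$.

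The only nontrivial step is the HLP application, and the single point that deserves care is verifying that the lift produced by the HLP automatically has first coordinate equal to $\pfi(m)$; this is what allows the bundle-theoretic lift to be repackaged as a map $u:M\times I\lra G$ with the desired properties. Everything else is formal.
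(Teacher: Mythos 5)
Your proposal is correct and follows essentially the same route as the paper: both directions of the first claim are handled by lifting the homotopy $(\pfi,\Phi_t)$ through the bundle of shifts $\alpha$ starting from the initial lift $(\pfi,1)$, with the observation that the lift's first coordinate is forced to be $\pfi$, and the second claim is the same algebraic manipulation with $w:=u_0^{-1}u$. No substantive differences to report.
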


\begin{proof} Let $M\xra{1}G$ denote the constant map that maps every point into the identity of $G$. 
If $u_0^t$ is a homotopy that translates $u_0$ into $1$ then $\psi_t:=u_0^t\pfi$ translates $u_0\pfi$ into $\pfi$ and $\Phi(m,t):=(\pfi (m),\psi_t (m))$ translates $(\pfi,\pfi)$ into
$(\pfi,\psi)$. The former admits a lift $(\pfi,1)$ into $Q$, indeed $\alpha\circ(\pfi,1)=(\pfi ,\pfi)$. 
Since $Q$ is a fiber bundle by \refL{1.1} the homotopy lifting property implies that 
the following diagram can be completed as indicated: 
\be
\begin{diagram}
M\times\{0\}&  \rTo{(\pfi,1)}  & X\times G\\
\dInto    &  \ruDotsto^{\widetilde{\Phi}} & \dTo{\alpha}\\
M\times I &  \rTo^{\Phi} & X\times X
\end{diagram}
\ee
By the upper triangle $\widetilde\Phi_2 (m,0)=1$ and by the lower one
$\widetilde\Phi_1 (m,t)=\Phi_1 (m,t)=\pfi (m)$, $\widetilde\Phi_2
(m,t)\widetilde\Phi_1 (m,t)=\widetilde\Phi_2(m,t)\pfi (m)=\psi_t (m)$. Set
$u_0(m):=\widetilde\Phi_2 (m,1)$ then $u_0\pfi =\psi$ and $\widetilde\Phi_2
(\cdot ,t)$ is a homotopy that translates the constant map $1$ into
$u_0$ as required.

For the second claim note that $u=u_0w$ implies $u\pfi
=u_0w\pfi=u_0\pfi$ and is homotopic to $\pfi$. Conversely, if
$u\pfi$ is homotopic to $\pfi$ then by the first claim there is also
a second nullhomotopic $u_0$ such that $u_\pfi =u_0 \pfi$. It
suffices to set $w:=u_0^{-1}u$.
\end{proof}
Let $(M,G)$ denote the space of continuous maps $M\to G$ and $(M,G)\pfi$ the space of maps $M\to X$ that have the form $u\pfi$ for $u\in(M,G)$. \refL{1.4} suggets that the following maps play a special role.
\begin{definition}[Stabilizer]\label{D:stab}
Given a map $M\overset{\pfi}\lra X$ we call $\Stab_\pfi:=\{w\in (M,G)|w\pfi=\pfi\}$
the {\it stabilizer} of $\pfi$. 
\end{definition}
We have a natural inclusion $\Stab_\pfi\ovs{\i}\hra(M,G)$. Denote
\begin{align*}
[M,G] &:=\pi_0((M,G)),\\
[(M,G)\pfi] &:=\pi_0((M,G)\pfi).
\end{align*}
Note that $[M,G]$ is the set of homotopy classes of continuous maps $M\lra G$ and by \refT{1.1} $[(M,G)\pfi]$ is the set of homotopy classes of continuous maps into $X=G/H$, which are $2$--homotopic to $\pfi$. 

If $G$ is compact simply connected $\pi_1(G)=\pi_2(G)=0$ and it follows from the Eilenberg classification theorem that
\begin{align}\label{u*}
[M,G] &\simeq H^3(M,\pi_3(G))\notag\\
[u] &\longmapsto u^*\b_G
\end{align}
is a group isomorphism. Under this isomorphism the subgroup
$\i_*\pi_0(\Stab_\varphi)=\pi_0(\i(\Stab_\varphi))$ is mapped
into a subgroup of $H^3(M,\pi_3(G))$ that we denote $\O_\varphi$, i.e
\begin{equation}\label{Ofi}
\O_\pfi:=\{w^*\b_G\mid w\in\Stab_\pfi\}\subset H^3(M,\pi_3(G)).
\end{equation}
Although the definition \eqref{Ofi} uses the map $\pfi$ explicitly, we will show 
\begin{lemma}\label{L:1.5}
$\O_\pfi$ only depends on the $2$-homotopy class of $\pfi$ or
equivalently on $\pfi^*\b_X$ and not on $\pfi$ itself.
\end{lemma}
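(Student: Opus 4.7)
The plan is to use the lift characterization of $2$-homotopy from \refT{1.1} to construct an explicit bijection between stabilizers that preserves pullbacks of $\b_G$. If $\pfi_1$ and $\pfi_2$ are $2$-homotopic, \refT{1.1} furnishes a map $v\in(M,G)$ with $\pfi_2=v\pfi_1$. Conjugation $w\mapsto v^{-1}wv$ (pointwise in $G$) sends $\Stab_{\pfi_2}$ into $\Stab_{\pfi_1}$: if $w\pfi_2=\pfi_2$ then
$$(v^{-1}wv)\pfi_1=v^{-1}w(v\pfi_1)=v^{-1}w\pfi_2=v^{-1}\pfi_2=\pfi_1,$$
and conjugation by $v^{-1}$ gives a two-sided inverse, so this is a bijection $\Stab_{\pfi_2}\leftrightarrow\Stab_{\pfi_1}$.

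The crucial step is verifying $(v^{-1}wv)^*\b_G=w^*\b_G$ in $H^3(M,\pi_3(G))$. I would appeal to \eqref{u*} being a group isomorphism, so pointwise multiplication in $G$ translates into addition in cohomology:
$$(v^{-1}wv)^*\b_G=(v^{-1})^*\b_G+w^*\b_G+v^*\b_G.$$
Since $v\cdot v^{-1}=1$ is the constant map and $1^*\b_G=0$, additivity forces $(v^{-1})^*\b_G=-v^*\b_G$; combined with the commutativity of the Abelian group $H^3(M,\pi_3(G))$ this yields the claim.

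Assembling the pieces,
$$\O_{\pfi_2}=\{w^*\b_G\mid w\in\Stab_{\pfi_2}\}=\{(v^{-1}wv)^*\b_G\mid w\in\Stab_{\pfi_2}\}\subseteq\O_{\pfi_1},$$
and the reverse inclusion follows by symmetry, applying the same argument with $v^{-1}$ in place of $v$. I expect the only nontrivial ingredient to be the additivity of pullbacks used above, but this is precisely what makes \eqref{u*} a group isomorphism and is thus already at our disposal; the stated dependence of $\O_\pfi$ only on $\pfi^*\b_X$ then follows since, by \refT{1.1}, $2$-homotopy of $\pfi_1,\pfi_2$ is equivalent to $\pfi_1^*\b_X=\pfi_2^*\b_X$.
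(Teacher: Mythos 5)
Your proof is correct and follows essentially the same route as the paper: identify $\Stab_{\pfi_2}$ with a conjugate of $\Stab_{\pfi_1}$ via the lift $v$ from \refT{1.1}, then use additivity of $u\mapsto u^*\b_G$ under pointwise multiplication to see that conjugation does not change the class. The only cosmetic difference is that the paper justifies this additivity by explicitly invoking the Hopf--Samelson theorem ($m^*\b_G=\pi_1^*\b_G+\pi_2^*\b_G$), whereas you cite the fact that \eqref{u*} is a group isomorphism -- which is the same underlying fact.
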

\begin{proof}
If $\psi$ is $2$--homotopic to $\pfi ,$ there is $M\ovs{u}\lra G$ with $\psi=u\pfi$ by \refT{1.1}. Hence
\be
\Stab_{\psi}=\{w| w\psi=\psi\}=\{w|wu\pfi=u\pfi\}=\{w|u^{-1}wu\in\Stab_\pfi\}=u(\Stab_\pfi)u^{-1}
\ee

Let $\pi_1,\pi_2$ be the natural projections from $G\times G$ to the first and the second factor
and $G\times G\ovs{m}\lra G$ be the multiplication map. Then it follows from the Hopf-Samelson theorem \cite{Dy,WG} that
\bee\label{HopSam1} 
m^*\b_G=\pi_1^*\b_G +\pi_2^*\b_G.
\eee 
This implies that given two maps $M\ovs{u,v}\lra G$ we have
$$
(u\cdot v)^*\b_G=(m\circ(u,v))^*\b_G=(u,v)^*(\pi_1^*\b_G+\pi_2^*\b_G)=u^*\b_G+v^*\b_G
$$
Using this formula we derive from definition \eqref{Ofi}
\begin{align*}
\O_\psi &=\{w^*\b_G| w\in\Stab_\psi\}=\{(uw'u^{-1})^*\b_G| w'\in\Stab_\pfi\}\\
&=\{u^*\b_G+ (w')^*\b_G-u^*\b_G| w'\in\Stab_\pfi\}= \O_\pfi 
\end{align*}
\end{proof}
Hence $ \O_\pfi = \O_{\pfi^*\b_X}$ and since every $\varkappa\in
H^2(M,\pi_2(X))$ is representable by a $\pfi$ one can talk about
$\O_\varkappa$. 
\begin{theorem}\label{T:1.2} Let $X=G/H$ be a compact simply connected homogeneous space and $M$ a
$3$-dimensional $CW$--complex. Two continuous maps $M\ovs{\psi,\pfi}\lra X$ are homotopic if and only if 
$\psi=u\pfi$ and $u^*\b_G\in\O_\pfi$ for some $M\xra{u}G$. Moreover
\begin{align}\label{secon2}
[(M,G)\pfi]\simeq H^3(M,\pi_3(G))/\O_\pfi &&(\simeq\text{means bijection}).
\end{align}
\end{theorem}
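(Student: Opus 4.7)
The plan is to establish the two assertions in turn, leveraging the machinery already assembled: \refT{1.1} furnishes a lift $u$ whenever $\psi$ and $\pfi$ are $2$-homotopic; \refL{1.4} characterizes when $u\pfi \sim \pfi$ via a decomposition $u = u_0 w$; \refL{1.5} shows $\O_\pfi$ is an invariant of the $2$-homotopy class; and the isomorphism \eqref{u*}, combined with the Hopf--Samelson identity $(u \cdot v)^*\b_G = u^*\b_G + v^*\b_G$ derived in the proof of \refL{1.5}, makes $u \mapsto u^*\b_G$ a group homomorphism from $(M,G)$ to $H^3(M,\pi_3(G))$ that kills nullhomotopic maps and sends $u^{-1}$ to $-u^*\b_G$.

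For the homotopy characterization, suppose first $\psi \sim \pfi$. Then they are $2$-homotopic, so \refT{1.1} supplies $u$ with $\psi = u\pfi$, and \refL{1.4} gives $u = u_0 w$ with $u_0$ nullhomotopic and $w \in \Stab_\pfi$; additivity then yields $u^*\b_G = w^*\b_G \in \O_\pfi$. Conversely, if $\psi = u\pfi$ with $u^*\b_G = w^*\b_G$ for some $w \in \Stab_\pfi$, then $(uw^{-1})^*\b_G = 0$, so $uw^{-1}$ is nullhomotopic by \eqref{u*}, and \refL{1.4} concludes $\psi = u\pfi \sim \pfi$.

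For the bijection, I would define $F : [(M,G)\pfi] \to H^3(M,\pi_3(G))/\O_\pfi$ by $F([\psi]) := u^*\b_G + \O_\pfi$, where $\psi = u\pfi$, then verify well-definedness, surjectivity and injectivity. Independence of the choice of $u$: if $u_1\pfi = u_2\pfi$, then $u_2^{-1}u_1 \in \Stab_\pfi$, and additivity gives $u_1^*\b_G - u_2^*\b_G = (u_2^{-1}u_1)^*\b_G \in \O_\pfi$. Independence on the homotopy class of $\psi$ is the more subtle step: writing $\psi_i = u_i\pfi$ with $\psi_1 \sim \psi_2$, the relation $(u_1 u_2^{-1})\psi_2 \sim \psi_2$ lets me apply \refL{1.4} at $\psi_2$ (rather than at $\pfi$) to get $u_1 u_2^{-1} = u_0 w$ with $w \in \Stab_{\psi_2}$, and \refL{1.5} then identifies $\O_{\psi_2} = \O_\pfi$ since $\psi_2$ and $\pfi$ are $2$-homotopic by construction.

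Surjectivity is immediate: any $\alpha \in H^3(M,\pi_3(G))$ is $u^*\b_G$ for some $u$ by \eqref{u*}, and $\psi := u\pfi$ then satisfies $F([\psi]) = \alpha + \O_\pfi$. Injectivity: $F([\psi_1]) = F([\psi_2])$ gives $u_1^*\b_G = u_2^*\b_G + w^*\b_G = (u_2 w)^*\b_G$ for some $w \in \Stab_\pfi$, whence \eqref{u*} makes $u_1 \sim u_2 w$, and any such homotopy $u_t$ yields $u_t \pfi$ as a homotopy from $\psi_1$ to $u_2 w \pfi = u_2 \pfi = \psi_2$. The main obstacle I anticipate is purely bookkeeping: keeping straight whether \refL{1.4} is being applied at $\pfi$ or at one of the $\psi_i$, and invoking \refL{1.5} to transfer stabilizer information between these basepoints; once this is handled, every remaining step is immediate from the additivity of $u \mapsto u^*\b_G$.
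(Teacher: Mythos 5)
Your argument for the homotopy criterion is essentially the paper's own: both reduce to \refL{1.4} via the decomposition $u=u_0w$ with $u_0$ nullhomotopic and $w\in\Stab_\pfi$, together with the additivity $(uv)^*\b_G=u^*\b_G+v^*\b_G$ and the injectivity of \eqref{u*}. For the bijection \eqref{secon2}, however, you take a genuinely different route. The paper proves that the map $(M,G)\ovs{\Pi}\lra(M,G)\pfi$, $u\mapsto u\pfi$, is a fibration with fiber $\Stab_\pfi$ (by lifting homotopies through the bundle of shifts of \refL{1.1}, following \cite{AS}) and then reads $[(M,G)\pfi]\simeq[M,G]/\i_*\pi_0(\Stab_\pfi)$ off the tail of its homotopy exact sequence, converting to \eqref{secon2} via \eqref{u*}. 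You instead build the bijection $F([\psi])=u^*\b_G+\O_\pfi$ by hand and check well-definedness, surjectivity and injectivity directly, using only additivity, \refL{1.4} applied at the basepoint $\psi_2$ rather than $\pfi$, and \refL{1.5} to transport $\O_{\psi_2}=\O_\pfi$. Both arguments are correct and your bookkeeping is handled properly. The fibration approach is shorter on the page and yields strictly more: the full exact sequence of $\Pi$ carries higher homotopy information about the mapping spaces, which is what \cite{AS} exploits; it also settles automatically the point that path components of the subspace $(M,G)\pfi$ coincide with homotopy classes in $(M,X)$ of its elements. Your elementary verification avoids having to prove that $\Pi$ is a fibration and makes the coset structure of the answer, and the precise role of \refL{1.5}, completely explicit; you handle the subspace-versus-ambient-homotopy point implicitly but adequately, since your injectivity step produces a path $u_t\pfi$ that visibly stays inside $(M,G)\pfi$.
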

\begin{proof}
By definition of the isomorphism \eqref{u*} having $u^*\b_G\in\O_\pfi$ is equivalent to $[u]\in\i_*\pi_0(\Stab_\varphi)$ or $u$  homotopic to $w\in\Stab_\varphi$. But then $uw^{-1}$ is nullhomotopic and $\psi=(uw^{-1})w\pfi$ is homotopic to $\varphi$ by \refL{1.4}.

To prove the bijection consider the map
\begin{align*}
(M,G)&\ovs{\Pi}\lra (M,G)\pfi\\
u&\mapsto u\pfi.
\end{align*}
We will show that this is a fibration following an idea from \cite{AS}. By definition we need to complete the diagram as indicated for $A$ arbitrary and $I:=[0,1]$
\bee\label{e0.33}
\begin{diagram}
A\times\{0\}& \rTo{F_0}          &(M,G)  \\
\dInto     &\ruDotsto         &\dTo_{\Pi}   \\
A\times I   &\rTo{f}          &(M,G)\pfi
\end{diagram}
\eee
Set $\overline{F}_{0}(m,a):=F_0(a)(m)$ and $\overline{f}(m,a,t):=f(a,t)(m)$. 
Recall from \refL{1.1} that the bundle of shifts \eqref{shifts} is a fiber bundle and therefore
a fibration so the following diagram can be completed as indicated:
\be
\begin{diagram}
(M\times A)\times {0}&  \rTo{(\overline{F}_0,\pfi )}  & G\times X\\
\dInto       &  \ruDotsto^{\overline{\Phi}}    & \dTo_{\alpha}        \\
(M\times A)\times I      &  \rTo{(\overline{f},\pfi )}      &X\times X
\end{diagram}
\ee
Inspecting the definitions of $\overline{F}_{0}$, $\overline{f}$ one concludes that the original diagram can be completed as well using $\overline{\Phi}$. 

If $v\pfi =u\pfi$ then $w:=u^{-1}v \in\Stab_\pfi$ and the fiber of this fibration is exactly $\Stab_\pfi$.
Using the homotopy exact sequence of the fibration 
$$
\pi_0(\Stab_\pfi)\ovs{\i_*}\lra\pi_0((M,G))\ovs{\pi_*}\lra\pi_0((M,G)\pfi)\lra 0.
$$ 
one gets
\begin{align*}\label{secon1}
[(M,G)\pfi]\simeq \frac{[M,G]}{\i_*\pi_0(\mbox{Stab}_\pfi)} &&(\simeq\text{means bijection}).
\end{align*}
Under the isomorphism \eqref{u*} this becomes \eqref{secon2}.
\end{proof} 
For applications it is convenient to reinterpret the secondary invariant 
in terms of the deRham cohomology. Let us start with the group $H^3(G,\pi_3(G))$. Recall that we assume that $G$ is compact connected and simply connected. By the universal coefficients theorem the following sequence is exact:
\be
0\lra \mathop{\rm Tor}\nolimits(H^2(G,\Z),\pi_3(G))\lra
H^3(G,\pi_3(G))\lra H^3(G,\Z)\otimes\pi_3(G)\lra 0.
\ee
Since $G$ is a simply connected Lie group $H^2(G,\Z)=0$ and the torsion term vanishes so
\be
H^3(G,\pi_3(G))\simeq H^3(G,\Z)\otimes\pi_3(G).
\ee
Since $G$ is also compact it is a direct product
of simple components $G=G_1\times\dots\times G_N$ and therefore
$$
\pi_3(G)\simeq\pi_3(G_1)\oplus\dots\oplus\pi_3(G_N).
$$
The sum on the right $\simeq\Z^N$ because $\pi_3(\Gamma)\simeq\Z$ for any simple Lie group $\Gamma$ \cite{BtD}. 
Thus
\be
H^3(G,\pi_3(G))\simeq H^3(G,\Z)\otimes\Z^N
\ee
Assume additionally that $M$ is a closed connected $3$--manifold.
Both third cohomology groups $H^3(G,\Z)$, $H^3(M,\Z)$ are free Abelian, 
the first one by the Hurewicz theorem and the second by Poincare duality.  
This means that not only are elements of $H^3(G,\Z)\otimes\Z^N$ completely 
represented by integral classes in $H^3(G,\R)\otimes\R^N$ but also that their 
pullbacks are completely characterized as integral classes in $H^3(M,\R)\otimes\R^N$. But real cohomology classes from 
$H^3(G,\R)\otimes\R^N$ are represented by $\R^N$--valued differential $3$--forms by the deRham theorem \cite{GHV,MS}. 

Let $\Theta$ be a differential form that represents $\b_G$. Being $\R^N$--valued it is a collection
$\Theta=(\Theta_1,\dots,\Theta_N)$ of $N$ scalar $3$--forms and the pullback
$$
u^*\Theta:=(u^*\Theta_1,\dots,u^*\Theta_N)
$$
is defined as a vector--valued $3$--form. We can go one step further. 
Assuming $M$ is orientable $H^3(M,\Z)\simeq\Z$ and again by the universal coefficients:
$$
H^3(M,\pi_3(G))\simeq H^3(M,\Z)\otimes\pi_3(G)\simeq H^3(M,\Z)\otimes\Z^N\simeq\Z^N.
$$
The last isomorphism is given by evaluation of cohomology classes on the fundamental class of $M$  
or in terms of differential forms, by integration over $M$ \cite{GHV,MS}. Thus we get a combined isomorphism
\bee\label{deRham}
\bal
H^3(M,\pi_3(G)) &\ovs{\sim}\lra \Z^N\\
u^*\b_G &\longmapsto \int\limits_Mu^*\Theta:=(\int\limits_Mu^*\Theta_1,\dots,\int\limits_Mu^*\Theta_N).
\eal
\eee
Under this isomorphism the subgroup $\O_\pfi\subset H^3(M,\pi_3(G))$ is transformed into a subgroup of $\Z^N$ 
and we denote its image by the same symbol, explicitly
\begin{equation}\label{I-Ofi}
\O_\pfi:=\{\int\limits_Mw^*\Theta\mid w\in\Stab_\pfi\}\subset \Z^N.
\end{equation}
If $G$ is a simple group then $H^3(M,\pi_3(G))\simeq\Z$ and
we have explicitly 
$$
\Theta=c_G\tr(g^{-1}dg\wedge g^{-1}dg \wedge g^{-1}dg ),
$$
where $c_G$ are numerical coefficients computed in
\cite{AK1} for every simple group. Thus
\begin{equation}\label{e0.9}
u^*\Theta=c_G\tr(u^{-1}du\wedge u^{-1}du \wedge u^{-1}du).
\end{equation}
In general,
$$
\Theta_k=c_{G_k}\tr(\pr_{\g_k}(g^{-1}dg)\wedge\pr_{\g_k}(g^{-1}dg)\wedge\pr_{\g_k}(g^{-1}dg)),
$$
where $\g_k$ are the Lie algebras of $G_k$. \refT{1.2} can be restated as
\begin{corollary}\label{C:secinv}
In conditions of \refT{1.2} let $M$ be a closed connected $3$--manifold. Then two continuous maps 
$M\ovs{\psi,\pfi}\lra X$ are homotopic if and only if 
$\psi=u\pfi$ and $\int\limits_Mu^*\Theta\in\O_\pfi$ for some $M\xra{u}G$. 
\end{corollary}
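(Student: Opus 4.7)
The plan is to deduce the corollary directly from \refT{1.2} by transporting everything along the deRham isomorphism \eqref{deRham}. Because $M$ is a closed connected orientable $3$-manifold and $G$ is compact, connected, and simply connected, the preceding discussion establishes the isomorphism $H^3(M,\pi_3(G))\ovs{\sim}\lra\Z^N$ that sends a class $u^*\b_G$ to the vector of integrals $\int_M u^*\Theta=(\int_M u^*\Theta_1,\dots,\int_M u^*\Theta_N)$, where $\Theta$ is a fixed differential form representative of $\b_G$. The first step is simply to record that this map is natural with respect to pullbacks by continuous maps $M\to G$ (after smoothing, which is harmless at the level of cohomology), so that for \emph{any} $u\in(M,G)$ the class $u^*\b_G\in H^3(M,\pi_3(G))$ is represented by $\int_M u^*\Theta\in\Z^N$.

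Next I would apply this naturality uniformly to all $w\in\Stab_\pfi$. By definition \eqref{Ofi}, the subgroup $\O_\pfi\subset H^3(M,\pi_3(G))$ is generated (as a set) by the classes $w^*\b_G$ for $w\in\Stab_\pfi$, and by the previous paragraph each such class corresponds under \eqref{deRham} to $\int_M w^*\Theta\in\Z^N$. Hence the image of $\O_\pfi$ in $\Z^N$ is exactly the set displayed in \eqref{I-Ofi}, which justifies the mild abuse of notation that reuses the symbol $\O_\pfi$ on both sides. Consequently the condition $u^*\b_G\in\O_\pfi$ of \refT{1.2} is, via the isomorphism, literally the condition $\int_M u^*\Theta\in\O_\pfi\subset\Z^N$ appearing in the corollary.

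With these translations in place, the corollary is obtained by simply restating \refT{1.2}: maps $\psi,\pfi$ are homotopic iff there exists $M\xra{u}G$ with $\psi=u\pfi$ and $u^*\b_G\in\O_\pfi$, which under \eqref{deRham} is the same as $\int_M u^*\Theta\in\O_\pfi$. No new topological input is required; the work is bookkeeping about how the abstract statement transports across the explicit deRham description. The only nonroutine point worth emphasizing is the verification that the isomorphism \eqref{deRham} is indeed natural in $u$, but this is immediate from the way it was constructed (evaluation on the fundamental class of $M$ followed by the identification $\pi_3(G)\simeq\Z^N$ arising from the product decomposition $G=G_1\times\dots\times G_N$ and the fact that each $\pi_3(G_k)\simeq\Z$ for simple $G_k$). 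Thus the corollary follows.
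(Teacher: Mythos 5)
Your proposal is correct and matches the paper's own treatment: the paper likewise presents the corollary as a direct restatement of \refT{1.2} under the isomorphism \eqref{deRham}, with the image of $\O_\pfi$ in $\Z^N$ given by \eqref{I-Ofi}. The only point to keep in view is that the whole argument rests on the hypotheses (orientability, $G$ compact simply connected) that make \eqref{deRham} an isomorphism onto $\Z^N$, which you correctly invoke.
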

If $M$ is not orientable then $H^3(M,\Z)=0$ and the secondary invariant is always $0$.

\par\vfill\pagebreak

{
\renewcommand{\baselinestretch}{1}

}

\end{document}